\newtheorem{theorem}{Theorem}[section]
\newtheorem{lemma}[theorem]{Lemma}
\newtheorem{definition}[theorem]{Definition}
\newtheorem{claim}[theorem]{Claim}
\newtheorem{proposition}[theorem]{Proposition}
\newtheorem{observation}[theorem]{Observation}
\theoremstyle{definition}
\newcommand\card[1]{|#1|}
\newcommand\ind{\mathbbm{1}}
\newenvironment{claimproof}{\begin{proof}\renewcommand{\qedsymbol}{\claimqed}}{\end{proof}\renewcommand{\qedsymbol}{\plainqed}}
\let\plainqed\qedsymbol
\tikzset{
	auto,
	node distance = 0.5cm,
	vtx/.style={
		circle,
		fill=black,
		draw,
		thick,
		minimum size=.08cm,
		inner sep=0pt
	},
	edge/.style = {draw},
	coveredge/.style = {draw, thick, color=white},
	etcedge/.style = {draw, dotted},
	nonedge/.style = {draw, dotted},
	diredge/.style = { ->},
}
\newcommand{\gettikzxy}[3]{%
  \tikz@scan@one@point\pgfutil@firstofone#1\relax
  \edef#2{\the\pgf@x}%
  \edef#3{\the\pgf@y}%
}
\newcommand{\planar}{{\mathcal{M}_{planar}}}
\newcommand{\outerplanar}{{\mathcal{M}_{outerplanar}}}
\newcommand{\class}{{\mathcal{M}_{\mathcal{C}}}}
\title{On the maximum number of edges in planar graphs of bounded degree and matching number}
\author[1]{Lars Jaffke\footnote{Supported by the Norwegian Research Council (NFR, project 274526).}}
\author[2]{Paloma T. Lima} 
\affil[1]{Department of Informatics, University of Bergen, Norway}
\affil[ ]{\texttt{lars.jaffke@uib.no}}
\affil[2]{Computer Science Department, IT University of Copenhagen, Denmark}
\affil[ ]{\texttt{palt@itu.dk}}
\date{}
\begin{document}

\maketitle

\begin{abstract}
We determine the maximum number of edges that a planar graph can have as a function of its maximum degree and matching number. \\

\noindent{\small \texttt{Keywords: planar graphs, matching number, extremal problems}}

\end{abstract}

\section{Introduction}

A typical problem in combinatorics is to determine the maximum number of edges that a graph can have under some structural constraint. For instance, the widely investigated Tur\'{a}n numbers are concerned with the maximum number of edges that a graph can have as a function of its number of vertices, if it does not have a fixed graph $H$ as an induced subgraph. In this work, we are interested in understanding how large a graph can be in terms of its degree and matching number. That is, we want to determine the maximum number of edges that a graph can have if its maximum degree and matching number are both bounded. In contrast with the previously mentioned Tur\'{a}n numbers, we do not impose any constraints on the number of vertices. 
Therefore, in general, if one of the two parameters is unbounded, there is no upper bound on the number of edges that a graph can have: 
a star can have unbounded number of edges while having matching number one, and 
a disjoint union of single edges can also have an unbounded number of edges while having maximum degree one. 
By Vizing's Theorem, every graph can have its edge set partitioned into a family of at most $\Delta(G)+1$ matchings, where $\Delta(G)$ denotes the maximum degree of the graph $G$. Thus, bounding both the maximum degree and the matching number is actually enough to bound the number of edges that a graph can have. The question here, which dates back to 1960, is how tight of a bound this is.

Chv\'atal and Hanson~\cite{CHVATAL76} showed an even better (and tight) upper bound on this value, in the case where no further restrictions are imposed on the graphs considered. Later on, Balachandran and Khare~\cite{BAL09} gave a constructive proof of the same result, which made it possible to identify the structure of the graphs achieving the given bound on the number of edges. Such graphs are called edge-extremal graphs. Following these studies, the number of edges of edge-extremal graphs were also investigated when additional structural constraints are imposed on the graphs. For instance, claw-free graphs were studied by Dibek et al.~\cite{DIBEK17}, bipartite graphs, split graphs, disjoint unions of split graphs and unit interval graphs by M\aa land~\cite{MALAND15} and, very recently, chordal graphs were considered by Blair et al.~\cite{LATIN2020}. In all these works, the authors are able to understand how additional structure affects the maximum number of edges that a graph can have: they provide tight upper bounds and provide examples of graphs achieving these bounds.

In this work, we turn our attention to the well-known class of planar graphs, which has already been the object of study for many other combinatorial questions of similar flavour~\cite{ThomassenJGT,DOWDEN,GHOSH2021,Planar3-path,HakimiJGT,EJC}. We determine the maximum number of edges that a planar graph can have, given the constraints on its maximum degree and matching number. Given $d,\nu\in\mathbb{N}$, we denote by $\planar(d,\nu)$ the set of planar graphs such that
$\Delta(G)<d$ and $\nu(G)<\nu$. (We require strict inequality in this definition to be consistent with the literature.) 
Our main result is summarized in the following theorem.

\begin{theorem}\label{thm:main}
	Let $d, \nu \ge 2$ be two integers.
	If $G$ is a graph in $\planar(d, \nu)$, 
	then
	\begin{align*}
		\card{E_G} \le 
		\begin{cases}
			(d-1)(\nu-1), &\mbox{ if } d = 2 \mbox{ or } d \ge 7 \\
			d(\nu-1), &\mbox{ if } d = 3 \\
			(d-1)(\nu-1) + \left\lfloor\frac{\nu-1}{2}\right\rfloor, &\mbox{ if } d \in \{4, 5\} \\
			(d-1)(\nu-1) + 2\cdot\left\lfloor\frac{\nu-1}{7}\right\rfloor + \ind[\nu-1 \bmod 7 \ge 4], &\mbox{ if } d = 6 \\
		\end{cases}
	\end{align*}
	where $\ind[\psi]$ equals $1$ if $\psi$ is true and $0$ otherwise.
	In all of the above cases, the bounds are tight.
\end{theorem}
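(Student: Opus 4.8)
The plan is to prove both the upper bounds and the tightness (matching constructions) for each of the four degree regimes. The natural starting point is the known Chv\'atal--Hanson bound, which gives $\card{E_G} \le (d-1)(\nu-1) + \lfloor (d-1)(\nu-1)/?\rfloor$-type estimates in the unrestricted case; I would first recall or re-derive the general inequality relating $\card{E_G}$, $\Delta(G)$, and $\nu(G)$, and then use planarity to sharpen it. The key planarity tool is Euler's formula, which for a simple planar graph on $n$ vertices yields $\card{E_G} \le 3n - 6$; combined with the handshake bound $\card{E_G} \le \tfrac12 \Delta(G)\cdot n$ and the Gallai--Edmonds or Berge--Tutte structural understanding of graphs with small matching number, this should let me push the generic bound down to the claimed $(d-1)(\nu-1)$ leading term.

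The main structural step I would carry out is to fix a maximum matching $M$ (so $\card{M} = \nu(G)-1 \le \nu - 2$, using the strict inequality $\nu(G) < \nu$) and analyze edges incident to the $2\card{M}$ matched vertices versus the independent set of unmatched vertices. Since no edge joins two unmatched vertices, every edge touches a matched vertex, so $\card{E_G} \le \sum_{v \in V(M)} \deg(v) - (\text{double-counted matching-internal edges})$, giving a crude bound of roughly $(d-1)\cdot 2\card{M}$. To get the sharp constant I would instead bound, for each matching edge $\{u,v\}$, the quantity $\deg(u)+\deg(v)-1$ and argue that planarity forces a savings: the unmatched neighbours of $u$ and $v$ lie in a planar bipartite-like configuration, and counting their edges via Euler's formula for bipartite planar graphs ($\card{E} \le 2n-4$) is what produces the degree-dependent correction terms $\lfloor(\nu-1)/2\rfloor$, the $\lfloor(\nu-1)/7\rfloor$ term, and the threshold indicator at $d=6$. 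The different regimes ($d=3$, $d\in\{4,5\}$, $d=6$, and $d=2$ or $d\ge 7$) arise precisely from how the planar density constraint interacts with the degree cap $d-1$: for small $d$ the degree bound is the binding constraint and planarity is slack, whereas for large $d$ planarity becomes binding and collapses the bound back to $(d-1)(\nu-1)$.

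For tightness I would construct, in each regime, an explicit planar graph meeting the bound. The unrestricted edge-extremal graphs of Balachandran--Khare are built from disjoint cliques (specifically $K_{d}$'s or near-cliques whose sizes are tuned to the degree), which are non-planar for $d \ge 5$; so the constructions here must replace cliques by the densest planar gadgets of bounded degree and matching number. I expect the extremal gadgets to be small planar graphs — triangulated pieces, wheels, or prism-like blocks — of a fixed size (the ``$7$'' appearing for $d=6$ strongly suggests a $7$-vertex or $7$-edge repeating unit, and the ``$2$'' in $2\lfloor(\nu-1)/7\rfloor$ a two-edge surplus per unit), which one repeats and glues along a matching to build arbitrarily large members of $\planar(d,\nu)$ attaining equality. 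Verifying planarity, the degree cap $\Delta < d$, the matching number $\nu(G) < \nu$, and the exact edge count for each gadget is routine but case-laden.

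The hard part will be the $d=6$ case and, more generally, identifying the exact optimal gadget in each regime and proving it is optimal --- that is, showing the upper-bound argument cannot be improved. The indicator term $\ind[\nu-1 \bmod 7 \ge 4]$ and the period-$7$ behaviour signal that the extremal structure is governed by a delicate local optimization over bounded-size planar configurations, so the real difficulty is a finite but intricate case analysis: classifying all locally densest planar neighbourhoods compatible with $\Delta < 7$ and a matching edge, then showing a global matching-based decomposition never beats the per-gadget optimum. I would therefore expect the bulk of the work --- and the main obstacle --- to be a tight discharging or amortized counting argument that distributes the Euler-formula ``edge budget'' across matching edges so that the worst case is exactly the claimed repeating gadget, with the fractional floors and the indicator emerging from the boundary remainders modulo $2$ and modulo $7$.
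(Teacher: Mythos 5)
Your plan for the upper bound has a genuine gap: the matching-based local counting you propose does not approach the claimed constants, and the idea that actually closes the distance is missing. Charging each edge to a matched endpoint of a maximum matching $M$ gives $\card{E_G}\le\sum_{v\in V(M)}\deg_G(v)-\card{M}\le(2d-3)(\nu-1)$, roughly twice the target, and the refinement you suggest (bounding $\deg(u)+\deg(v)-1$ per matching edge and extracting planar savings from the unmatched neighbours) cannot work locally: a double star, with two adjacent centres each carrying $d-2$ pendants, is planar, respects $\Delta<d$, and realizes the full $2d-3$ edges around one matching edge. What rules such configurations out is global --- they force extra matching edges elsewhere --- and this is exactly what the paper's key tool captures. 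The paper first passes to an edge-extremal graph with the maximum number of star components and invokes the Balachandran--Khare lemma (a Gallai--Edmonds-type statement) that every non-star component $H$ is then \emph{factor-critical}, hence has exactly $2\nu_1+1$ vertices where $\nu_1=\nu(H)$. Only at that point does Euler's formula bite, giving $\card{E_H}\le 6\nu_1-3$ together with the degree bound $\card{E_H}\le\lfloor(d-1)(2\nu_1+1)/2\rfloor$, and the proof finishes by component-by-component replacement against explicit gadgets ($K_5^-$ for $d=5$; $A_4$ and $A_7$ for $d=6$). Without the reduction to factor-critical components you have no bound on $\card{V_G}$ in terms of $\nu$, so Euler's formula gives nothing. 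You mention Gallai--Edmonds in passing, but your actual counting scheme does not use it, and it is the load-bearing step.

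Two further concrete omissions. For $d\ge 7$ the paper obtains $(d-1)(\nu-1)$ for $d\geq 8$ in one line from the theorem (Vizing, Sanders--Zhao) that planar graphs with $\Delta\ge 7$ are $\Delta$-edge-colorable: an edge coloring partitions $E_G$ into at most $d-1$ matchings, each of size at most $\nu-1$. Your proposal never invokes edge colorability, and recovering this regime by discharging would amount to reproving a hard theorem. For $d=6$, the finite case analysis you anticipate needs a specific nonexistence input --- no planar graph realizes the degree sequence $5^{10}4$ or $5^{12}4$ --- to kill too-dense factor-critical components with matching number $5$ or $6$; this is where the period-$7$ remainders and the indicator $\ind[\nu-1\bmod 7\ge 4]$ actually come from. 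Your intuition about the constructions (small dense planar gadgets, a repeating unit behind the $\lfloor(\nu-1)/7\rfloor$ term, boundary corrections) is sound, but the upper-bound machinery you describe would not reach the stated bounds.
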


For each of the above cases of Theorem~\ref{thm:main}, we construct concrete examples of graphs achieving the maximum number of edges. This paper is organized as follows. In Section~\ref{sec:prelim} we define the necessary concepts and state preliminary results that will be used throughout the proof. Section~\ref{sec:proof} is entirely devoted to the proof of Theorem~\ref{thm:main}. We conclude the paper in Section~\ref{sec:conclusion} with problems for further research.

\section{Preliminaries}\label{sec:prelim}

Given two integers $a$ and $p$, we denote by $(a \bmod p)$ the remainder after dividing $a$ by $p$. In particular, $(a \bmod p)$ is an integer in $\{0,\ldots,p-1\}$.

All the graphs considered in this paper are simple and undirected. We denote by $V_G$ and $E_G$ the vertex set and edge set of $G$, respectively. Given $v\in V_G$, we denote by $N_G(v)$ the set of vertices that are adjacent to~$v$. The \emph{degree} of $v$ is denoted by $\deg_G(v)$ and is defined as $|N_G(v)|$. The \emph{degree of a graph $G$} is the maximum degree of a vertex in $G$ and it is denoted by $\Delta(G)$. The \emph{degree sequence} of a graph $G$ with $V_G=\{v_1,v_2,\ldots, v_n\}$ is an integer sequence $\sigma=d_1d_2\ldots d_n$ such that $d_i=\deg_G(v_i)$. It is usual to assume that $d_1\geq d_2\geq\ldots\geq d_n$. We say a graph $G$ \emph{realizes} the degree sequence $\sigma$ if its degree sequence is $\sigma$.

A set $M\subseteq E_G$ is a \emph{matching} if no two edges in $M$ share a common vertex and $M$ is a \emph{perfect matching} if $M$ is a matching and every vertex of $V_G$ is the endpoint of an edge in $M$. The \emph{matching number of $G$}, denoted by $\nu(G)$, is the largest size of a matching in $G$. A graph $G$ is a \emph{factor-critical graph} if for every $v\in V_G$, $G\setminus v$ has a perfect matching, where $G\setminus v$ denotes the graph obtained from $G$ by the removal of $v$. 

A \emph{tree} is a connected acyclic graph. A \emph{star} is a tree with at most one vertex that is not a leaf, and for $k\in \mathbb{N}$, a \emph{$k$-star}, denoted by $K_{1,k}$, is a star with $k$ leaves. A graph is a \emph{complete graph} on $n$ vertices, denoted by $K_n$, if there is an edge between every pair of its vertices. Given two graphs $G$ and $H$, the \emph{disjoint union of $G$ and $H$}, denoted by $G+H$ is the graph with vertex set $V_G\cup V_H$ and edge set $E_G\cup E_H$. We denote by $pH$ the graph that is the disjoint union of $p$ copies of a graph $H$. 

A graph is \emph{planar} if it admits an embedding on the plane without crossing edges. A well-known fact about planar graphs is that its number of edges can be upper bounded in terms of its number of vertices. We state this in the following observation for future reference.

\begin{observation}\label{lem:planar:edges}
	If $G$ is a planar graph on $n\geq 3$ vertices, then $\card{E_G} \le 3n - 6$.
\end{observation}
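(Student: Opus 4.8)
The plan is to prove the classical bound $|E_G| \le 3n - 6$ for a planar graph on $n \ge 3$ vertices using Euler's formula together with a face-counting (double-counting) argument. First I would invoke Euler's formula for connected plane graphs: if $G$ is a connected planar graph embedded in the plane with $n$ vertices, $m$ edges, and $f$ faces, then $n - m + f = 2$. I would first handle the case where $G$ is connected and reduce the general (possibly disconnected) case to it at the end, since adding edges to connect components only increases $|E_G|$ while keeping the graph planar, so the extremal bound is attained by connected (indeed, maximal) planar graphs.

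The heart of the argument is a double count of incidences between edges and faces. Fix a planar embedding of $G$ and consider the quantity $\sum_{\text{faces } \phi} \ell(\phi)$, where $\ell(\phi)$ is the number of edges bordering the face $\phi$ (with an edge counted twice if it borders the same face on both sides). On one hand, every edge lies on the boundary of exactly two faces (or contributes $2$ to a single face if it is a bridge), so this sum equals $2m$. On the other hand, since $G$ is simple with $n \ge 3$, every face is bounded by at least three edges, giving $\ell(\phi) \ge 3$ for each of the $f$ faces, hence $\sum_\phi \ell(\phi) \ge 3f$. Combining, $2m \ge 3f$, i.e.\ $f \le \tfrac{2}{3}m$.

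Substituting this into Euler's formula finishes the connected case: from $n - m + f = 2$ we get $f = 2 - n + m$, so $2 - n + m \le \tfrac{2}{3}m$, which rearranges to $\tfrac{1}{3}m \le n - 2$, i.e.\ $m \le 3n - 6$. For the disconnected case, I would either apply Euler's formula in the form $n - m + f = 1 + c$ for a plane graph with $c$ connected components and repeat the count, or simply note that any planar graph on $n \ge 3$ vertices is a subgraph of a maximal (hence connected) planar graph on the same vertex set, for which the bound holds.

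The argument is essentially routine and presents no serious obstacle; the only point requiring mild care is the face-length lower bound $\ell(\phi) \ge 3$, which relies on $G$ being simple (no loops or multiple edges) and on $n \ge 3$, so that no face is bounded by fewer than three edges. This is precisely why the hypothesis $n \ge 3$ appears in the statement. Since this is a standard textbook fact, I expect the author's proof to cite it rather than reproduce the Euler-formula derivation in full.
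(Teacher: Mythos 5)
Your argument is correct and complete: Euler's formula combined with the edge--face double count (every face bounded by at least three edge-incidences, every edge contributing two) gives $2m \ge 3f$ and hence $m \le 3n-6$, and your reduction of the disconnected case to the connected one is sound. As you anticipated, the paper offers no proof at all --- it records this as a well-known observation for later reference --- so there is nothing to compare against; your derivation is the standard one and fills the (intentional) gap correctly.
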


We will also use the following result concerning degree sequences of planar graphs of maximum degree five. 

\begin{theorem}[\cite{degreeseq}]\label{thm:degreeseq}
There is no planar graph realizing the degree sequence $5^{10}4$. The same holds for $5^{12}4$.
\end{theorem}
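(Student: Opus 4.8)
The plan is to exploit the fact that these degree sequences force a (near-)maximal planar structure, and then derive a contradiction from the local structure around the unique vertex of degree~$4$.

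First I would reduce to a triangulation. For $5^{10}4$ we have $n=11$ vertices and $\card{E_G}=\tfrac12(10\cdot 5+4)=27=3n-6$, so by Observation~\ref{lem:planar:edges} any realizing graph $G$ must be a maximal planar graph, i.e.\ a triangulation; in particular $G$ is $3$-connected and (by Whitney) has an essentially unique embedding in which every face is a triangle and the neighbourhood of each vertex induces a cycle. For $5^{12}4$ we have $n=13$ and $\card{E_G}=32=(3n-6)-1$, so $G$ is a \emph{near}-triangulation with exactly one non-triangular face, necessarily a quadrilateral. I would also record that the global Euler/parity constraints are all \emph{satisfied}: writing $\sum_v(6-\deg(v))=12$ for a triangulation, the sequence $5^{10}4$ gives $10\cdot 1+1\cdot 2=12$, so Euler's formula alone does not rule these sequences out and a genuinely structural argument is required.

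The core of the argument is the analysis of the unique degree-$4$ vertex $w$. Its four neighbours $a,b,c,d$ induce the $4$-cycle $C=abcd$ bounding the disk that contains $w$, and the four faces at $w$ are $wab,wbc,wcd,wda$. Deleting $w$ merges these into a single quadrilateral face, so $G-w$ is a triangulated disk with boundary $C$ and six interior vertices, all of degree $5$, while $a,b,c,d$ now have degree $4$. A short count (absent chords of $C$, treated below) then pins down the interaction with the boundary: each of $a,b,c,d$ sends exactly two edges into the interior, for a total of $8$ boundary-to-interior edges, whence the six interior vertices span exactly $11$ edges among themselves. Reading off the rotation at each boundary vertex forces its two interior neighbours to be the apices $y_{ab},y_{bc},y_{cd},y_{da}$ of the triangles on the boundary edges, and forces these ``corner'' vertices to be distinct for consecutive edges and to form an inner $4$-cycle $y_{ab}y_{bc}y_{cd}y_{da}$.

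The contradiction is then obtained from degree bookkeeping inside the six-vertex interior graph $H$. In the generic case the four corners are distinct and the two remaining interior vertices have no boundary neighbour, hence degree $5$ inside $H$ on only six vertices, i.e.\ each is adjacent to everything; but then each corner is adjacent to both of them and to its two cycle-neighbours, giving it degree at least $4$ inside $H$, contradicting that it has only $5-2=3$ interior edges (equivalently, the forced edges already total $13>11$). The remaining work — which I expect to be the main obstacle — is the finite but tedious case analysis disposing of the degenerate configurations: coincidences among the corner vertices (opposite corners collapsing into a single vertex adjacent to all of $C$, which would fill the disk by a $4$-wheel and leave no room for the other interior vertices), and the presence of a chord $ac$ or $bd$ of $C$ in $G$, which splits the outer disk and must be handled separately. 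Finally, $5^{12}4$ is handled by the same strategy applied to its near-triangulation: either the quadrilateral face is incident to $w$, in which case deleting $w$ and re-examining the boundary is directly analogous, or one first triangulates the quadrilateral face by a diagonal and repeats the disk analysis with two extra interior vertices; the bookkeeping is longer, but the mechanism — forced corner vertices overloading the interior degree budget — is identical.
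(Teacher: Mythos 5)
The paper does not actually prove this statement: Theorem~\ref{thm:degreeseq} is imported verbatim from the cited reference, so there is no in-paper argument to compare yours against; your proposal has to stand on its own. For the sequence $5^{10}4$ it essentially does: the reduction to a triangulation via $\card{E_G}=3n-6$, the deletion of the degree-$4$ vertex $w$, the count of $8$ boundary-to-interior and $11$ interior edges, the identification of the two interior neighbours of each boundary vertex with the apices of the faces on the two incident boundary edges, and the resulting overload ($13>11$ forced interior edges) in the generic case are all correct, and the deferred degenerate cases do collapse quickly (a chord $ac$ forces a side of the split disk with $k$ interior vertices to contain $2k$ interior edges with $k\le 4$, which is impossible; coinciding opposite apices force a vertex of degree~$6$ or starve the remaining interior vertices of neighbours). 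So the first half is a complete plan with only routine writing left.

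The genuine gap is the sequence $5^{12}4$, which you dispatch in one sentence, and where the mechanism you name does \emph{not} carry over. In the analogous generic configuration ($w$ not on the quadrilateral face, no chords, four distinct apices, quadrilateral face not touching the link of $w$) the budget is met \emph{exactly}: there are $8$ interior vertices, $8$ boundary-to-interior edges, hence $16$ interior-interior edges, and the forced interior degree sum is $4\cdot 3+4\cdot 5=32=2\cdot 16$, so ``forced corner vertices overloading the interior degree budget'' yields no contradiction. A finer count is needed (for instance: each apex has only one interior neighbour outside the apex $4$-cycle, so at most $4$ apex-to-non-apex edges exist, while each of the four non-apex interior vertices needs at least $5-3=2$ apex neighbours, giving at least $8$ such edges). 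Moreover your two fallback moves are both problematic: if the quadrilateral face is incident to $w$ then the neighbourhood of $w$ need not induce a cycle and the apex analysis changes, and ``triangulating the quadrilateral face by a diagonal'' raises two degrees to $6$ (or alters the degree-$4$ vertex), destroying the ``all interior vertices have degree $5$'' bookkeeping on which everything rests. On top of this, $5^{12}4$ has its own list of degenerate subcases (the quadrilateral face meeting the link of $w$, chords, coinciding apices) that are nowhere addressed. The strategy is salvageable, but as written the second half of the theorem is not proved.
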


Given two integers $d$ and $\nu$ and a graph class $\mathcal{C}$, we denote by $\mathcal{M}_{\mathcal{C}}(d,\nu)$ the set of all graphs $G$ in $\mathcal{C}$ such that $\Delta(G)<d$ and $\nu(G)<\nu$. A graph in $\mathcal{M}_{\mathcal{C}}(d,\nu)$ that has the maximum number of edges is called an \emph{edge-extremal graph}. When the graph class considered is the class of all graphs, we write simply $\mathcal{M}(d,\nu)$.

\begin{lemma}[\cite{BAL09}] \label{lem:factorc}
Let $\mathcal{C}$ be a graph class that is closed under vertex deletion and closed under taking disjoint union with stars. Let $G$ be an edge-extremal graph in $\mathcal{M}_{\mathcal{C}}(d,\nu)$ with maximum number of connected components that are $(d-1)$-stars. Then every connected component of $G$ that is not a $(d-1)$-star is factor-critical. 
\end{lemma}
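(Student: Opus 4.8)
The plan is to argue by contradiction through a local exchange that trades an offending vertex for a fresh $(d-1)$-star. Suppose $G$ has a connected component $H$ that is neither a $(d-1)$-star nor factor-critical. The first step is to locate inside $H$ an \emph{essential} vertex, that is, a vertex $v$ saturated by every maximum matching of $H$, equivalently a vertex with $\nu(H\setminus v) = \nu(H) - 1$. I would obtain such a vertex from the standard matching-theoretic fact that a connected graph is factor-critical if and only if $\nu(H\setminus v) = \nu(H)$ for every vertex $v$. This is a consequence of the Gallai--Edmonds structure theorem: if every vertex of $H$ were missed by some maximum matching, then the set $D(H)$ of inessential vertices would be all of $V_H$, and since $H$ is connected this forces $H$ itself to be the single, factor-critical component of $H[D(H)]$, contrary to assumption. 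Hence at least one vertex $v$ of $H$ must be essential, and for this $v$ one has $\nu(H\setminus v) = \nu(H) - 1$ (removing a vertex drops the matching number by at most one, and equality $\nu(H\setminus v)=\nu(H)$ would exhibit a maximum matching missing $v$).

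Given such a $v$, I would form $G' = (G\setminus v) + K_{1,d-1}$, deleting $v$ and adding a brand-new $(d-1)$-star as a separate component. Since $\mathcal{C}$ is closed under vertex deletion and under disjoint union with stars, $G'\in\mathcal{C}$; moreover $\Delta(G')\le\max\{\Delta(G),\,d-1\}<d$, and because $v$ is essential and lies in the single component $H$, we get $\nu(G') = \nu(G\setminus v) + 1 = (\nu(G)-1)+1 = \nu(G) < \nu$. Hence $G'\in\mathcal{M}_{\mathcal{C}}(d,\nu)$. The edge count satisfies $\card{E_{G'}} = \card{E_G} - \deg_G(v) + (d-1)$, and since $\deg_G(v)\le\Delta(G)\le d-1$ we obtain $\card{E_{G'}} \ge \card{E_G}$.

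Now I would extract the contradiction in two exhaustive cases. If $\deg_G(v) < d-1$, then $\card{E_{G'}} > \card{E_G}$, contradicting the edge-extremality of $G$. Otherwise $\deg_G(v) = d-1$, so $\card{E_{G'}} = \card{E_G}$ and $G'$ is again edge-extremal; but since $H$ is not a star, deleting $v$ destroys none of the $(d-1)$-star components of $G$ (these lie entirely in the other components), while we have added one fresh star, so $G'$ has strictly more $(d-1)$-star components than $G$, contradicting the choice of $G$. Either way we reach a contradiction, proving every non-star component is factor-critical. The main obstacle is the first step: one must justify the existence of an essential vertex in every connected non-factor-critical component, which is exactly where the Gallai--Edmonds machinery (or the cited characterization of factor-critical graphs) enters; the remaining bookkeeping for degrees, matching number, and star count is routine.
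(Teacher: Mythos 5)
The paper does not prove this lemma itself; it imports it from~\cite{BAL09}. Your argument is correct and is essentially the standard proof of that result: Gallai's lemma (via the Gallai--Edmonds decomposition) supplies an essential vertex $v$ in any connected component that is neither a $(d-1)$-star nor factor-critical, and the exchange $G \mapsto (G\setminus v)+K_{1,d-1}$ together with the two-tier extremality of $G$ (number of edges first, then number of $(d-1)$-star components) yields the contradiction in both the $\deg_G(v)<d-1$ and $\deg_G(v)=d-1$ cases; I see no gaps.
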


The following statement gives a summary of the results obtained by Balachandran and Khare~\cite{BAL09}. 

\begin{theorem}[\cite{BAL09}] \label{theo:generalcase}
The maximum number of edges of a graph in $\mathcal{M}(d,\nu)$ is given by $(d-1)(\nu-1)+\lfloor\frac{d-1}{2}\rfloor \lfloor\frac{\nu-1}{\lceil \frac{d-1}{2}\rceil}\rceil$. Moreover, a graph achieving this number of edges is 

\[
  \begin{cases} 
   rK_{1,d-1}+qK'_d, & \text{if } d\text{ is even} \\

   rK_{1,d-1}+qK_d,   & \text{if } d\text{ is odd},
  \end{cases}
\]

\noindent where $\nu-1=q\lceil \frac{d-1}{2}\rceil+r$, with $r\geq 0$, and $K'_d$ is the graph obtained from $K_d$ by the removal of the edges of a perfect matching and addition of a new vertex adjacent to $d-1$ vertices.
\end{theorem}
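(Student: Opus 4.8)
The plan is to prove the upper bound and the matching construction separately, with the upper bound resting almost entirely on Lemma~\ref{lem:factorc}. Throughout I would abbreviate $D = d-1$ and $s = \nu-1$, so that membership in $\mathcal{M}(d,\nu)$ means $\Delta(G)\le D$ and $\nu(G)\le s$, and the target reads $Ds + \lfloor D/2\rfloor\lfloor s/\lceil D/2\rceil\rfloor$. For the upper bound I would let $G$ be an edge-extremal graph in $\mathcal{M}(d,\nu)$ chosen with the maximum number of connected components that are $(d-1)$-stars; such a graph exists because the number of edges is bounded (e.g.\ by Vizing's theorem, as noted in the introduction), and hence so is the number of star components. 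Since the class of all graphs is trivially closed under vertex deletion and under taking disjoint unions with stars, Lemma~\ref{lem:factorc} applies and yields that every component of $G$ that is not a $(d-1)$-star is factor-critical. Thus $G$ is a disjoint union of $(d-1)$-stars and factor-critical graphs, and since the matching number is additive over components, it suffices to account for the edges of each component relative to its matching number.

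The heart of the argument is a per-component bound on the \emph{excess} $\epsilon(C) := \card{E_C} - D\cdot\nu(C)$. For a $(d-1)$-star we have $\card{E_C} = D$ and $\nu(C) = 1$, so $\epsilon(C) = 0$. For a factor-critical component $C$ with $\nu(C) = k$, factor-criticality forces $\card{V_C} = 2k+1$, so the degree-sum bound gives $\card{E_C}\le\lfloor(2k+1)D/2\rfloor$ and therefore $\epsilon(C)\le\lfloor D/2\rfloor$ (checking the parity of $D$). Moreover, if $2k+1\le D$, that is $k<\lceil D/2\rceil$, then $C$ has at most $\binom{2k+1}{2}$ edges, whence $\epsilon(C)\le k(2k+1-D)\le 0$. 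I would then record the clean dichotomy: every component satisfies $\epsilon(C)\le\lfloor D/2\rfloor$, and in fact $\epsilon(C)\le 0$ unless $C$ is a factor-critical component with $\nu(C)\ge\lceil D/2\rceil$.

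To finish the upper bound, let $L$ be the set of factor-critical components with $\nu(C)\ge\lceil D/2\rceil$; by the dichotomy these are the only components that can have positive excess. Since $\sum_C\nu(C) = \nu(G)\le s$ and each $C\in L$ contributes at least $\lceil D/2\rceil$ to this sum, we obtain $\card{L}\le\lfloor s/\lceil D/2\rceil\rfloor$. Writing $\card{E_G} = \sum_C\bigl(D\cdot\nu(C) + \epsilon(C)\bigr)$ and using $\sum_C\nu(C)\le s$ together with $\sum_{C\notin L}\epsilon(C)\le 0$ gives $\card{E_G}\le Ds + \card{L}\lfloor D/2\rfloor \le Ds + \lfloor D/2\rfloor\lfloor s/\lceil D/2\rceil\rfloor$, which is exactly the claimed bound after substituting $D=d-1$ and $s=\nu-1$.

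For the matching construction I would exhibit $q$ copies of the dense factor-critical graph of matching number $\lceil D/2\rceil$ together with $r$ copies of $K_{1,d-1}$, where $\nu-1 = q\lceil(d-1)/2\rceil + r$ with $0\le r<\lceil(d-1)/2\rceil$: this dense graph is $K_d$ when $d$ is odd (then $\card{V_C}=d$ and $C$ is $(d-1)$-regular) and $K'_d$ when $d$ is even (where the added apex restores degree $d-1$ on all but one vertex). One checks directly that each dense component realizes excess exactly $\lfloor D/2\rfloor$ and matching number $\lceil D/2\rceil$, that the whole graph has maximum degree $d-1<d$ and matching number $\nu-1<\nu$, and that its edge count equals the formula. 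I expect the only genuinely delicate points to be the parity bookkeeping in the bound $\epsilon(C)\le\lfloor D/2\rfloor$ and the verification that the threshold $2k+1\le D$ separates the components of nonpositive excess from the potentially positive ones with no integer $k$ falling in between; everything else is routine once Lemma~\ref{lem:factorc} has reduced $G$ to a disjoint union of stars and factor-critical graphs.
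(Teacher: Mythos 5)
Your proof is correct; the paper does not actually prove this statement (it is imported verbatim from Balachandran and Khare~\cite{BAL09}), but your argument --- choosing an edge-extremal $G$ maximizing the number of $(d-1)$-star components, invoking Lemma~\ref{lem:factorc}, bounding the per-component excess $\epsilon(C)=|E_C|-(d-1)\nu(C)$ by $\lfloor (d-1)/2\rfloor$ for factor-critical components and by $0$ when $\nu(C)<\lceil (d-1)/2\rceil$, and exhibiting the $K_d$/$K_d'$ construction --- is precisely the approach of that source and mirrors how this paper proves its own planar analogues (Lemmas~\ref{lemma:dis5}--\ref{lemma:dis7}). The arithmetic all checks out, including the equivalence $2k+1\le d-1 \iff k<\lceil (d-1)/2\rceil$ separating the two excess regimes, the counting bound $|L|\le\lfloor(\nu-1)/\lceil(d-1)/2\rceil\rfloor$, and the verification that $K_d$ (for odd $d$) and $K_d'$ (for even $d$) each attain excess exactly $\lfloor(d-1)/2\rfloor$ with matching number $\lceil(d-1)/2\rceil$.
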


An \emph{edge coloring} of a graph is an assignment of colors to its edges such that any two edges that share a common endpoint receive distinct colors. Thus, any edge coloring of a graph $G$ uses at least $\Delta(G)$ many colors. The well-known Theorem of Vizing states that every graph can be edge colored with $\Delta(G)+1$ colors. Deciding whether a graph can be colored with $\Delta(G)$ colors is $\mathsf{NP}$-complete~\cite{HOLYER}. One of the very few known sufficient conditions for a graph not to be $\Delta(G)$-edge colorable is when the graph has, in some sense, too many edges. Note that an edge coloring is a partition of the edge set of the graph into matchings. Hence, if a graph $G$ has strictly more than $\Delta(G)\nu(G)$ edges, it cannot be $\Delta(G)$-edge colored. The following useful observation follows directly from the above discussion.

\begin{observation}\label{obs:edgecol}
Let $\mathcal{C}$ be a graph class such that every $G \in \mathcal{C}$ is $\Delta(G)$-edge colorable. Then an edge-extremal graph in $\class(d,\nu)$ has at most $(d-1)(\nu-1)$ edges.
\end{observation}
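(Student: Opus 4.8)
The plan is to exploit the fact, already assembled in the discussion preceding the statement, that a proper edge coloring of a graph is precisely a partition of its edge set into matchings. First I would take an arbitrary graph $G \in \class(d,\nu)$. By hypothesis $G$ lies in $\mathcal{C}$, so $G$ admits a proper edge coloring using exactly $\Delta(G)$ colors. Each color class of this coloring is a matching and therefore contains at most $\nu(G)$ edges, and there are exactly $\Delta(G)$ such color classes. Summing the sizes of the color classes over the whole palette yields $\card{E_G} \le \Delta(G)\cdot\nu(G)$.

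The remaining step is to translate this product bound through the defining inequalities of $\class(d,\nu)$. Since membership in $\class(d,\nu)$ requires $\Delta(G) < d$ and $\nu(G) < \nu$, and since both $\Delta(G)$ and $\nu(G)$ are integers, we obtain $\Delta(G) \le d-1$ and $\nu(G) \le \nu-1$. Substituting these into the previous inequality gives $\card{E_G} \le (d-1)(\nu-1)$. As this bound holds for every graph in the class, it holds in particular for an edge-extremal one, which is what the statement asserts.

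I do not anticipate a genuine obstacle here: the entire content is the observation that color classes are matchings, combined with the convention of strict inequalities in the definition of $\class(d,\nu)$. The only point deserving a moment of care is the integrality argument that converts the strict inequalities $\Delta(G) < d$ and $\nu(G) < \nu$ into the non-strict bounds $\Delta(G) \le d-1$ and $\nu(G) \le \nu-1$; this is precisely where the paper's choice to phrase the definition with strict inequalities pays off, letting the two factors of the bound line up exactly with $d-1$ and $\nu-1$.
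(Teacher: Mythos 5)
Your argument is correct and is essentially the same as the paper's: the paper derives the bound directly from the observation that an edge coloring partitions $E_G$ into $\Delta(G)$ matchings, each of size at most $\nu(G)$, and then uses the strict inequalities in the definition of $\class(d,\nu)$ exactly as you do. Nothing is missing.
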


Before we consider the case of planar graphs, we observe that in the special case of outerplanar graphs, a well-known subclass of planar graphs, Observation~\ref{obs:edgecol} is enough to obtain tight bounds for our problem. A graph is an \emph{outerplanar graph} if it admits a planar embedding in which all vertices are in the same face. Every outerplanar graph~$G$ with $\Delta(G)\geq 3$ can be edge colored with $\Delta(G)$ many colors~\cite{FIORINI}. Hence, the following result is immediate for this graph class.

\begin{proposition}
If $G$ is a graph in $\outerplanar(d,\nu)$, then

\[ |E_G|\leq
  \begin{cases} 
   3(\nu-1), & \text{if } d=3, \\

   (d-1)(\nu -1),   & \text{if } d=2 \text{ or } d\geq 4.
  \end{cases}
\]

\noindent Moreover, graphs achieving this bound are $(\nu-1)K_d$ when $d\in \{2,3\}$ and $(\nu-1)K_{1,d-1}$ when $d\geq 4$.
\end{proposition}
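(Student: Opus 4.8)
The plan is to derive the bound from Observation~\ref{obs:edgecol} and then identify tight examples. First I would verify that the hypothesis of Observation~\ref{obs:edgecol} is met: every outerplanar graph $G$ with $\Delta(G)\ge 3$ is $\Delta(G)$-edge colorable by the cited result of Fiorini~\cite{FIORINI}. Since $\outerplanar(d,\nu)$ consists of outerplanar graphs with $\Delta(G)<d$ and $\nu(G)<\nu$, applying the observation directly yields $|E_G|\le (d-1)(\nu-1)$ for every $d$ such that the colorability hypothesis applies. The only value of $d\ge 3$ for which a graph of maximum degree $d-1$ might fail to be $\Delta$-edge colorable via Fiorini is when $d-1=2$, i.e. $d=3$, since the cited theorem requires $\Delta\ge 3$; so the $d=3$ case must be handled separately, and for $d=2$ the graphs are just matchings with $|E_G|\le \nu-1=(d-1)(\nu-1)$, which is trivial.

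For the case $d=3$, the graphs in $\outerplanar(3,\nu)$ have maximum degree at most $2$, hence every connected component is a path or a cycle. I would bound the number of edges by analyzing how many edges such a component contributes per unit of matching number: a cycle or path on $k$ vertices has matching number $\lfloor k/2\rfloor$ and at most $k$ edges, so the edge-to-matching ratio is maximized by triangles, each contributing $3$ edges while using up one unit of the matching budget. Summing over components gives $|E_G|\le 3(\nu-1)$, matching the claimed bound; the extremal configuration is $(\nu-1)$ disjoint triangles, i.e. $(\nu-1)K_3 = (\nu-1)K_d$.

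For the tight examples in the remaining cases, I would exhibit explicit outerplanar graphs attaining each upper bound and confirm their parameters. When $d\ge 4$, the graph $(\nu-1)K_{1,d-1}$ is a disjoint union of $(\nu-1)$ stars: it is outerplanar (every star is outerplanar), each star contributes exactly $d-1$ edges giving $(\nu-1)(d-1)$ edges total, each star has maximum degree $d-1<d$, and each star has matching number $1$ so the whole graph has matching number $\nu-1<\nu$; this matches the upper bound. When $d\in\{2,3\}$, the graph $(\nu-1)K_d$ is a disjoint union of $(\nu-1)$ copies of a complete graph $K_d$, which is outerplanar for $d\le 3$ (both $K_2$ and $K_3$ embed with all vertices on the outer face), has the correct maximum degree $d-1$ and matching number $\nu-1$, and attains the bound as computed above.

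The argument is essentially routine given the cited colorability result, so I do not expect a genuine obstacle; the only point requiring care is the boundary value $d=3$, where Fiorini's theorem does not apply and one must instead argue directly from the path/cycle component structure that triangles are the edge-optimal building blocks. Verifying that the stated extremal graphs indeed lie in the respective classes (outerplanarity, degree, and matching number) is the other step to state explicitly, but each check is immediate.
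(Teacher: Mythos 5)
Your proposal is correct and follows essentially the same route as the paper, which declares the proposition ``immediate'' from Observation~\ref{obs:edgecol} together with Fiorini's theorem that outerplanar graphs of maximum degree at least $3$ are $\Delta$-edge-colorable; you simply make explicit the details the paper leaves implicit, namely the degenerate cases $d=2$ and $d=3$ (where components are paths and cycles and triangles maximize the edge-to-matching ratio) and the verification of the extremal examples.
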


\section{Proof of Theorem~\ref{thm:main}}\label{sec:proof}

In this section we present a proof of Theorem~\ref{thm:main}. It is easy to see that, when $d\leq 4$, the edge-extremal graphs of $\mathcal{M}(d,\nu)$ (see Theorem~\ref{theo:generalcase}) are planar graphs. In particular, when $d=2$ (resp.\ $d=3$) the edge-extremal graphs are given by a disjoint union of $\nu -1$ edges (resp.\ triangles). When $d=4$, an edge-extremal graph is given by a disjoint union of $\lfloor\frac{\nu-1}{2}\rfloor$ copies of $K'_4$ and, if $\nu$ is even, one copy of $K_{1,3}$. We summarize this in the next observation and, from now on, we assume $d\geq 5$.
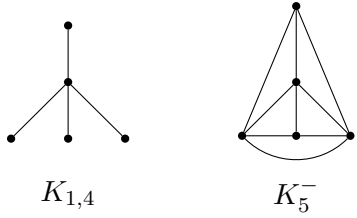
\begin{figure}
	\centering
\begin{tikzpicture}
%
%
	\begin{scope}[xshift=3cm,node distance=0.75cm]
		\fourstar
		\node[] at (0,-1.5) {$K_{1, 4}$};
	\end{scope}
	\begin{scope}[xshift=6cm]
		\node[circle, minimum size=2cm] (C1) at (0,0) {};
		
		\node[vtx] (1) at (0,0) {};
		\node[vtx] (2) at (C1.90) {};
		\node[vtx] (3) at (C1.225) {};
		\node[vtx] (4) at (C1.315) {};
		\node[vtx] (5) at ($(3)!0.5!(4)$) {};
		
		\foreach \i in {2,3,4,5} {
			\draw[edge] (1) -- (\i);
		}
		\foreach \i in {3,4} {
			\draw[edge] (2) -- (\i);
		}
		\foreach \i in {5} {
			\draw[edge] (3) -- (\i);
		}
		\foreach \i in {5} {
			\draw[edge] (4) -- (\i);
		}
		\path[-] (3) edge [bend right=45] (4);
		\node[] at (0,-1.5) {$K_5^-$};
	\end{scope}
\end{tikzpicture}
	\caption{Possible connected components of the pivotal graphs in $\planar(5,\nu)$.}
	\label{fig:pivotal:4}
\end{figure}

\begin{observation}
If $G$ is a graph in $\planar(d,\nu)$ with $d\leq 4$, then

\[ |E_G|\leq
  \begin{cases} 
   \nu-1, & \text{if } d=2, \\

   3(\nu -1),   & \text{if } d=3,\\
   
   7\lfloor\frac{\nu -1}{2}\rfloor+3(\nu-1 \bmod 2), & \text{if } d=4.
  \end{cases}
\]
\end{observation}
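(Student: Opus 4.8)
The plan is to derive the observation directly from the general result of Balachandran and Khare (Theorem~\ref{theo:generalcase}), exploiting the fact that for small $d$ the unrestricted edge-extremal graphs already happen to be planar. Since $\planar(d,\nu)\subseteq\mathcal{M}(d,\nu)$, every $G\in\planar(d,\nu)$ satisfies $\card{E_G}\le (d-1)(\nu-1)+\lfloor\frac{d-1}{2}\rfloor\lfloor\frac{\nu-1}{\lceil(d-1)/2\rceil}\rceil$, the maximum attained over all graphs. The first step is therefore to specialise this expression to $d\in\{2,3,4\}$ and to check that it reduces to the three quantities claimed in the statement. For $d=2$ the second summand vanishes (as $\lfloor 1/2\rfloor=0$), leaving $\nu-1$; for $d=3$ it becomes $2(\nu-1)+(\nu-1)=3(\nu-1)$; and for $d=4$ it becomes $3(\nu-1)+\lfloor\frac{\nu-1}{2}\rfloor$, which a short case distinction on the parity of $\nu-1$ rewrites as $7\lfloor\frac{\nu-1}{2}\rfloor+3(\nu-1\bmod 2)$. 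This arithmetic is routine and establishes the upper bounds.

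For tightness I would verify that the graphs witnessing Theorem~\ref{theo:generalcase} in these regimes are planar, so that they actually belong to $\planar(d,\nu)$ and meet the bound. For $d=2$ and $d=3$ the extremal graphs are disjoint unions of edges and of triangles, respectively, and planarity is immediate. The only case requiring an argument is $d=4$, where the extremal graph is a disjoint union of $\lfloor\frac{\nu-1}{2}\rfloor$ copies of $K'_4$ together with one copy of $K_{1,3}$ when $\nu-1$ is odd. Since disjoint unions and stars are planar, it suffices to exhibit a planar embedding of $K'_4$. I would describe $K'_4$ explicitly: starting from the $4$-cycle left after deleting a perfect matching from $K_4$, the added vertex is joined to three of the four cycle vertices, which are consecutive along the cycle, so it can be placed in the interior of the $4$-cycle and connected without crossings. (One may also simply note that $K'_4$ has $5$ vertices and $7\le 3\cdot 5-6$ edges and that it contains no $K_5$ or $K_{3,3}$ subdivision.)

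I do not expect a genuine obstacle here: the content is entirely inherited from Theorem~\ref{theo:generalcase}, and the work reduces to a parity computation for $d=4$ together with the observation that $K'_4$ is planar. The one point to state carefully is the edge and matching count of the building blocks, namely that $K'_4$ has exactly $7$ edges, matching number $2$, and maximum degree $3$, while $K_{1,3}$ has $3$ edges, matching number $1$, and maximum degree $3$. This guarantees that the disjoint union indeed lies in $\planar(4,\nu)$ (its maximum degree is $3<4$ and its matching number is exactly $\nu-1<\nu$) and that it attains $7\lfloor\frac{\nu-1}{2}\rfloor+3(\nu-1\bmod 2)$ edges, matching the upper bound.
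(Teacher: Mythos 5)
Your proposal is correct and follows exactly the route the paper takes: since $\planar(d,\nu)\subseteq\mathcal{M}(d,\nu)$, the bound of Theorem~\ref{theo:generalcase} applies, and the extremal graphs it describes (disjoint unions of edges, triangles, or copies of $K'_4$ plus possibly a $K_{1,3}$) are planar, so the bound carries over unchanged; your arithmetic specialisation for $d\in\{2,3,4\}$ and the planarity check of $K'_4$ are exactly the details the paper leaves implicit. No gaps.
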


By Observation~\ref{obs:edgecol}, together with the fact that every planar graph of maximum degree at least seven can be edge colored with $\Delta(G)$ many colors~\cite{SANDERS2001,Vizing1965}, we obtain the following. 

\begin{observation}\label{obs:lessthan5}
If $G$ is a graph in $\planar(d,\nu)$ with $d\geq 8$, then $|E_G|\leq (d-1)(\nu-1)$. Moreover, equality holds if $G$ is isomorphic to $(\nu-1)K_{1,d-1}$.
\end{observation}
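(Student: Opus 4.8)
The plan is to bound $|E_G|$ through a proper edge coloring of $G$, exploiting that a graph admitting a proper edge coloring with $k$ colors decomposes into $k$ matchings, each of size at most $\nu(G)\le \nu-1$. Concretely, if $G$ can be edge colored with $k$ colors, then $|E_G|\le k(\nu-1)$. Since $G\in\planar(d,\nu)$ gives $\Delta(G)\le d-1$ and $\nu(G)\le \nu-1$, it therefore suffices to show that every such $G$ can be properly edge colored with at most $d-1$ colors.

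First I would split on the maximum degree of $G$. If $\Delta(G)\ge 7$, then $G$ is a planar graph of maximum degree at least seven, so by the cited result it can be edge colored with $\Delta(G)\le d-1$ colors, and we are done. If instead $\Delta(G)\le 6$, then Vizing's Theorem provides a proper edge coloring of $G$ with $\Delta(G)+1\le 7$ colors; here the hypothesis $d\ge 8$ is exactly what is needed, since it guarantees $7\le d-1$, so again at most $d-1$ colors are used. In both cases $|E_G|\le (d-1)(\nu-1)$, as required. This is essentially the argument behind Observation~\ref{obs:edgecol}, the only new ingredient being that planarity lets us dispense with the extra Vizing color precisely in the high-degree regime where we cannot afford it.

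For tightness I would exhibit the graph $(\nu-1)K_{1,d-1}$ and check the three defining properties: it is a disjoint union of stars, hence a forest and in particular planar; its maximum degree is $d-1<d$; and its matching number is $\nu-1<\nu$, since each star contributes exactly one edge to a maximum matching and the components are disjoint. Counting its edges gives $(\nu-1)(d-1)$, matching the upper bound.

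I expect the only real subtlety to be the low-degree case. The cited planar edge-coloring theorem requires $\Delta(G)\ge 7$ (for $\Delta=6$ the analogous statement is Vizing's still-open planar graph conjecture), so graphs with $\Delta(G)\le 6$ genuinely fall outside its scope and must be treated by the generic Vizing bound. The point to get right is that the $7$ colors Vizing guarantees there are affordable exactly because the standing assumption is $d\ge 8$ rather than $d\ge 7$; this is precisely why the borderline case $d=7$ is excluded from this observation and handled separately.
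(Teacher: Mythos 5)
Your proof is correct and follows the same route as the paper, which derives the bound from Observation~\ref{obs:edgecol} together with the fact that planar graphs of maximum degree at least seven are $\Delta$-edge-colorable. Your explicit treatment of the case $\Delta(G)\le 6$ via Vizing's theorem (using $d\ge 8$ to afford the extra color) is a welcome bit of care that the paper leaves implicit.
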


We now focus on the remaining three cases, that is, when $d\in\{5,6,7\}$. In these cases, a substantially more careful analysis is necessary to determine the number of edges in the edge-extremal graphs of $\planar(d,\nu)$.

\newcommand\kfiveminus{\ensuremath{K_5^{-}}}

We start with the case $d=5$. We denote by \kfiveminus~the graph obtained from the complete graph on five vertices by the removal of one edge. Note that $|E_{\kfiveminus} |=9$, $\Delta(\kfiveminus)=4$ and $\nu(\kfiveminus)=2$.

\begin{definition}
For each $\nu$, a graph $G$ is \emph{pivotal in $\planar(5, \nu)$} 
	if $G$ is isomorphic to  
\[
\begin{aligned}
	\frac{\nu-1}{2}\kfiveminus, \quad & \text{if } \nu \text{ is odd},\\
	\left\lfloor\frac{\nu-1}{2}\right\rfloor\kfiveminus+K_{1,4}, \quad & \text{if } \nu \text{ is even}.
\end{aligned}
\]	

\end{definition}

In Figure~\ref{fig:pivotal:4}, we illustrate the possible connected components of the pivotal graphs in $\planar(5,\nu)$.

\begin{lemma}\label{lemma:dis5}
If $G$ is a graph in $\planar(5,\nu)$, then $|E_G|\leq 9\lfloor\frac{\nu-1}{2}\rfloor+ 4 (\nu -1 \bmod 2)$. Moreover, if $G$ is pivotal in $\planar(5,\nu)$, then equality holds.
\end{lemma}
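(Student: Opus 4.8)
The plan is to reduce the global bound to a single per-component estimate via Lemma~\ref{lem:factorc}, and then carry out a careful size analysis of the components. Since the class of planar graphs is closed under vertex deletion and under disjoint union with stars, I may take $G$ to be an edge-extremal graph in $\planar(5,\nu)$ having the maximum possible number of components isomorphic to the $4$-star $K_{1,4}$; Lemma~\ref{lem:factorc} then guarantees that every other component is factor-critical. Writing $m_C := \nu(C)$ for a component $C$, the target will be the inequality $|E_C| \le 4 m_C + \lfloor m_C/2\rfloor$. Granting this, I sum over components: matching number is additive, so $\sum_C m_C = \nu(G) \le \nu - 1$, and using $4\sum_C m_C \le 4(\nu-1)$ together with the superadditivity of the floor, $\sum_C \lfloor m_C/2\rfloor \le \lfloor (\sum_C m_C)/2\rfloor \le \lfloor(\nu-1)/2\rfloor$, I obtain $|E_G| \le 4(\nu-1)+\lfloor(\nu-1)/2\rfloor$. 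A one-line parity check shows this equals $9\lfloor\frac{\nu-1}{2}\rfloor + 4(\nu-1 \bmod 2)$, the claimed bound.

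It remains to prove the per-component inequality. A $4$-star has $m_C = 1$ and $4 = 4\cdot 1 + 0$ edges, so it is tight. For a factor-critical component $C$ we have $n_C = 2m_C + 1$ (odd order with a near-perfect matching). For small $m_C$ I use planarity directly: $m_C = 1$ forces $C = K_3$ with $3 \le 4$ edges, and $m_C = 2$ gives $n_C = 5$, whence Observation~\ref{lem:planar:edges} yields $|E_C| \le 3\cdot 5 - 6 = 9 = 4\cdot 2 + 1$. For large $m_C$ the degree bound suffices: since $\Delta(C) \le 4$, handshaking gives $|E_C| \le 2 n_C = 4 m_C + 2$, and $4 m_C + 2 \le 4 m_C + \lfloor m_C/2\rfloor$ as soon as $m_C \ge 4$.

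The one residual---and genuinely hard---case is $m_C = 3$, i.e.\ $n_C = 7$, where planarity gives only $|E_C|\le 15$ and the degree bound only $|E_C|\le 14$, whereas I need $|E_C|\le 13$. Since $14$ edges on $7$ vertices with $\Delta\le 4$ forces $C$ to be $4$-regular, the whole lemma comes down to the clean statement that \emph{no $4$-regular planar graph on $7$ vertices exists}, which I expect to be the crux. I would prove it in two steps. First, such a graph must be $2$-connected: it is connected, because a disconnected $4$-regular planar graph would have at least two components, each itself $4$-regular and planar, hence on at least $6$ vertices, giving at least $12$ vertices; and a cut vertex $v$ would force, by a handshaking-parity argument on the components of $C-v$, exactly two components each receiving an even number $\ge 2$ of edges from $v$, so that each induced part on $a$ vertices would carry $2a-1$ edges, which is impossible for $a\le 4$ and thus forces both parts onto $\ge 5$ vertices, contradicting $|A_1|+|A_2|=6$. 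Second, for the $2$-connected case I apply Euler's formula: with $V=7$, $E=14$ we get $F=9$ and face-degree sum $28$, so exactly one face is a quadrilateral and the rest are triangles. At most one diagonal of that facial $4$-cycle can already be an edge (two interleaving chords of a face would cross), so inserting the missing diagonal into the quadrilateral produces a $7$-vertex triangulation of degree sequence $5^2 4^5$ in which the two degree-$5$ vertices are \emph{adjacent}. The final step rules this out: examining the neighbour-cycles (links) of the two adjacent degree-$5$ vertices shows that their two ``extra'' neighbours form two $2$-element subsets of a common $3$-element set, which necessarily overlap, and a short case check on the overlap then produces a vertex of degree $3$ or degree $6$, contradicting $4$-regularity.

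Finally, the ``moreover'' part is a direct verification. For $\nu$ odd the pivotal graph $\frac{\nu-1}{2}\kfiveminus$ is planar with $\Delta = 4$, matching number $2\cdot\frac{\nu-1}{2}=\nu-1 < \nu$, and $9\cdot\frac{\nu-1}{2}$ edges; for $\nu$ even the graph $\lfloor\frac{\nu-1}{2}\rfloor\kfiveminus + K_{1,4}$ is planar with $\Delta=4$, matching number $2\lfloor\frac{\nu-1}{2}\rfloor+1 = \nu-1$, and $9\lfloor\frac{\nu-1}{2}\rfloor+4$ edges, so in each parity it meets the bound $9\lfloor\frac{\nu-1}{2}\rfloor + 4(\nu-1 \bmod 2)$ with equality.
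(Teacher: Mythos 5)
Your proposal is correct in substance and rests on the same skeleton as the paper's proof---Lemma~\ref{lem:factorc} reduces everything to factor-critical components, and the only genuinely hard case is a component with matching number $3$, i.e.\ the non-existence of a $4$-regular planar graph on seven vertices---but it packages both halves differently. Where the paper runs exchange arguments (replacing each offending component by pivotal pieces with at least as many edges, so that in the end every component is $K_{1,4}$ or \kfiveminus), you prove the per-component inequality $|E_C|\le 4\nu(C)+\lfloor \nu(C)/2\rfloor$ and sum it using superadditivity of the floor; this is a tidier route to the numerical bound, at the price of not describing the extremal graphs (which the lemma does not claim anyway). For the crux, the paper has an essentially two-line argument: a $4$-regular graph on seven vertices has a $2$-regular complement, hence is $\overline{C_7}$ or $\overline{C_3+C_4}$, and neither of these complements is planar. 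Your route---$2$-connectivity, Euler's formula forcing exactly one quadrilateral face, inserting the missing diagonal to obtain a triangulation with degree sequence $5^24^5$ whose two degree-$5$ vertices are adjacent, then a link analysis---is correct but much longer, and its final step (``a short case check \dots produces a vertex of degree $3$ or $6$'') is asserted rather than carried out. I checked that it does go through: the two faces on the edge $xy$ give common neighbours $u,w$ of degree $4$, their links force $N(u)=\{x,y,p_1,q_1\}$ and $N(w)=\{x,y,p_2,q_2\}$ with $p_1\ne q_1$, $p_2\ne q_2$, and the unavoidable overlap between $\{p_1,p_2\}$ and $\{q_1,q_2\}$ then yields either a vertex adjacent to all six others or a seventh vertex with an inconsistent forced neighbourhood. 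So there is no gap in the mathematics, only an uncompleted case analysis at the very end; the complement trick would let you delete that entire second half.
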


\begin{proof}
Let $G$ be an edge-extremal graph in $\planar(5,\nu)$ with maximum number of connected components that are isomorphic to stars and, subject to that, with maximum number of connected components isomorphic to \kfiveminus. 

\begin{claim}
Every connected component of $G$ is isomorphic to $K_{1,4}$ or to \kfiveminus.
\end{claim}

\begin{claimproof}
Suppose this is not the case. Let $H$ be such a component. If $H$ is a star and it is not isomorphic to $K_{1,4}$, then since $\Delta(H)\leq 4$, $H$ is isomorphic to $K_{1,a}$ with $a\leq 3$. In this case we can replace the connected component $H$ in $G$ by a copy of $K_{1,4}$ and obtain a graph that has degree at most four and the same matching number as $G$, but has strictly more edges than $G$. This is a contradiction with the fact that $G$ is edge-extremal in $\planar(5,\nu)$. Hence $H$ is not a star. Let $\nu_1$ be the size of a maximum matching in $H$. By Lemma~\ref{lem:factorc}, $H$ is factor-critical and therefore $|V_H|=2\nu_1+1$. Since $\Delta(H)\leq 4$, we have that $2|E_H|\leq 4|V_H|=4(2\nu_1+1)$. That is, $|E_H|\leq 4\nu_1+2$. 

If $\nu_1$ is even and $\nu_1\geq 4$, let $H'$ be a pivotal graph in $\planar(5, \nu_1 +1)$. That is, since $\nu_1+1$ is odd, $H'$ is given by the disjoint union of $\frac{\nu_1}{2}$ copies of \kfiveminus. Note that $\nu(H')=\nu_1$ and $|E_{H'}|=9\frac{\nu_1}{2}$.   Since $\nu_1\geq 4$, $|E_{H'}|\geq 4\nu_1+2$. Hence, we can replace the connected component $H$ in $G$ by $H'$, and obtain a graph in $\planar(5,\nu)$ with the same matching number and at least as many edges as $G$. However, this graph has strictly more connected components isomorphic to \kfiveminus, which is a contradiction with our initial choice of $G$.

Similarly, if $\nu_1$ is odd and $\nu_1\geq 5$, let $H'$ be a pivotal graph in $\planar(5, \nu_1 +1)$. Then $\nu(H')=\nu_1$, $|E_{H'}|=9\frac{\nu_1-1}{2}+4$, and since $\nu_1\geq 4$, $|E_{H'}|\geq 4\nu_1+2$. By replacing $H$ by $H'$ in $G$, we again reach a contradiction with the maximality of the number of connected components of $G$ that are isomorphic to stars and to \kfiveminus. It remains to consider the cases $\nu_1\in\{2,3\}$.

 If $\nu_1=2$, then $|V_H|=5$ and, since $H$ is planar, $|E_H|\leq 3\cdot 5-6=9$. Therefore we can replace $H$ by a copy of \kfiveminus~and obtain another edge-extremal graph of $\planar(5,\nu)$ with larger number of connected components isomorphic to \kfiveminus, a contradiction.

 If $\nu_1=3$, then $|V_H|=7$. If $|E_H|\leq 13$, we can replace $H$ in $G$ by a copy of $\kfiveminus+K_{1,4}$ and reach a contradiction with our choice of $G$. We now show that $H$ cannot have strictly more than 13 edges. Assume for a contradiction that $|E_H|\geq 14$. Then $H$ must be 4-regular. To conclude the proof, we observe that there is no planar 4-regular graph on seven vertices. Let $\overline{H}$ be the complement of $H$. Then $\overline{H}$ is a 2-regular graph. Thus, $\overline{H}=C_7$ or $\overline{H}=C_4+C_3$. This contradicts the fact that $H$ is planar, since both the complement of $C_7$ and the one of $C_4+C_3$ are not planar graphs. 
\end{claimproof} 

If $G$ has two components that are isomorphic $K_{1,4}$, then we can replace these two components by a copy of \kfiveminus~and obtain a graph in $\planar(5,\nu)$ that has strictly more edges than $G$, a contradiction. Hence, $G$ has at most one component that is isomorphic to $K_{1,4}$. Since $\nu(K_{1,4})=1$, $\nu(\kfiveminus)=2$ and $\nu(G)\leq\nu_1-1$, we have that, if $\nu$ is odd, then $G$ is a disjoint union of $\frac{\nu_1 -1}{2}$ copies of \kfiveminus. If $\nu$ is even, then $G$ is a disjoint union of $\lfloor\frac{\nu-1}{2}\rfloor$ copies of \kfiveminus~and one copy of $K_{1,4}$. Hence $|E_G|= 9\lfloor\frac{\nu-1}{2}\rfloor+ 4 (\nu -1 \bmod 2)$ and $G$ is pivotal in $\planar(5,\nu)$. Since $G$ is an edge-extremal graph in $\planar(5,\nu)$, any other graph $G'\in \planar(5,\nu)$ is such that $|E_{G'}|\leq |E_{G}|$. This concludes the proof of Lemma~\ref{lemma:dis5}.
\end{proof}


\newcommand\fourstuff{\ensuremath{A_4}}
\newcommand\sevenstuff{\ensuremath{A_7}}
We now proceed to the case $d=6$. Let \fourstuff~and \sevenstuff~be the graphs depicted in Figure~\ref{fig:foursevenstuff}. Note that $\nu(\fourstuff)=4$, $|E_{\fourstuff}|=21$, $\nu(\sevenstuff)=7$ and $|E_{\sevenstuff}|=37$.

\begin{definition}
For each $\nu$, a graph $G$ 
	is \emph{pivotal in $\planar(6, \nu)$}
	if $G$ is isomorphic to  
\[
\begin{aligned}
	\left\lfloor\frac{\nu-1}{7}\right\rfloor\sevenstuff+ (\nu-1 \bmod 7)K_{1,5}, \quad & \text{if } (\nu-1 \bmod 7)\leq 3,\\
	\left\lfloor\frac{\nu-1}{7}\right\rfloor\sevenstuff+\fourstuff+((\nu-1 \bmod 7)-4)K_{1,5}, \quad & \text{if } (\nu-1 \bmod 7)\geq 4.
\end{aligned}
\]	
\end{definition}

\begin{figure}
	\centering
\begin{tikzpicture}
	\begin{scope}[xshift=-4cm, node distance=0.75cm]
		\fivestar
		\node[] at (0,-2.5) {$K_{1,5}$};
	\end{scope}
	\begin{scope}
		\node[circle, minimum size=1cm] (C1) at (0,0) {};
		\node[circle, minimum size=3.5cm] (C2) at (0,-0.375) {};
	
		\node[vtx] (1) at (0,0) {};
		\node[vtx] (2) at (C2.90) {};
		\node[vtx] (3) at (C1.180) {};
		\node[vtx] (4) at (C1.240) {};
		\node[vtx] (5) at (C1.300) {};
		\node[vtx] (6) at (C1.0) {};
	
		\node[vtx] (9) at (C2.210) {};
		\node[vtx] (10) at (C1.270) [below=0.25cm] {};
		\node[vtx] (12) at (C2.330) {};
	
		\foreach \i in {2,3,4,5,6} {
			\draw[edge] (1) -- (\i);
		}
		\foreach \i in {3,6,9,12} {
			\draw[edge] (2) -- (\i);
		}
		\foreach \i in {4,9} {
			\draw[edge] (3) -- (\i);
		}
		\foreach \i in {5,9,10} {
			\draw[edge] (4) -- (\i);
		}
		\foreach \i in {6,10,12} {
			\draw[edge] (5) -- (\i);
		}
		\foreach \i in {12} {
			\draw[edge] (6) -- (\i);
		}
	
		\foreach \i in {10,12} {
			\draw[edge] (9) -- (\i);
		}
		\foreach \i in {12} {
			\draw[edge] (10) -- (\i);
		}
		\node[] at (0,-2.5) {$\fourstuff$};
	\end{scope}
	\begin{scope}[xshift=6cm]
	\node[circle, minimum size=1cm] (C1) at (0,0) {};
	\node[circle, minimum size=2cm] (C2) at (0,0) {};
	\node[circle, minimum size=5cm] (C3) at (0,-0.375) {};
	
	\node[vtx] (1) at (0,0) {};
	\node[vtx] (2) at (C1.90) {};
	\node[vtx] (3) at (C1.180) {};
	\node[vtx] (4) at (C1.240) {};
	\node[vtx] (5) at (C1.300) {};
	\node[vtx] (6) at (C1.0) {};
	
	\node[vtx] (7) at (C2.70) {};
	\node[vtx] (8) at (C2.110) {};
	\node[vtx] (9) at (C2.230) {};
	\node[vtx] (10) at (C2.270) [above=0.125cm] {};
	\node[vtx] (11) at (C2.270) [below=0.25cm] {};
	\node[vtx] (12) at (C2.310) {};
	
	\node[vtx] (13) at (C3.90) {};
	\node[vtx] (14) at (C3.210) {};
	\node[vtx] (15) at (C3.330) {};
	
	\foreach \i in {2,3,4,5,6} {
		\draw[edge] (1) -- (\i);
	}
	\foreach \i in {3,6,7,8} {
		\draw[edge] (2) -- (\i);
	}
	\foreach \i in {4,8,9} {
		\draw[edge] (3) -- (\i);
	}
	\foreach \i in {5,9,10} {
		\draw[edge] (4) -- (\i);
	}
	\foreach \i in {6,10,12} {
		\draw[edge] (5) -- (\i);
	}
	\foreach \i in {7,12} {
		\draw[edge] (6) -- (\i);
	}
	\foreach \i in {8,13,15} {
		\draw[edge] (7) -- (\i);
	}
	\foreach \i in {13,14} {
		\draw[edge] (8) -- (\i);
	}
	\foreach \i in {10,11,14} {
		\draw[edge] (9) -- (\i);
	}
	\foreach \i in {11,12} {
		\draw[edge] (10) -- (\i);
	}
	\foreach \i in {12,14,15} {
		\draw[edge] (11) -- (\i);
	}
	\foreach \i in {15} {
		\draw[edge] (12) -- (\i);
	}
	\foreach \i in {14,15} {
		\draw[edge] (13) -- (\i);
	}
	\draw[edge] (14) -- (15);
	\node[] at (0,-2.5) {$\sevenstuff$};
	\end{scope}
\end{tikzpicture}
	\caption{Possible connected components of the pivotal graphs in $\planar(6,\nu)$.}
	\label{fig:foursevenstuff}
\end{figure}

\begin{lemma}\label{lemma:dis6}
	If $G$ is a graph in $\planar(6, \nu)$,
	then, \[\card{E_G} \leq 5(\nu-1) + 2\cdot\left\lfloor \frac{\nu-1}{7}\right\rfloor + \ind[\nu-1 \bmod 7 \ge 4].\]
	Moreover, if $G$ is pivotal in $\planar(6,\nu)$, then equality holds.
\end{lemma}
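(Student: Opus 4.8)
The plan is to follow the strategy used for Lemma~\ref{lemma:dis5}, now with $\Delta(G)\le 5$ and with three admissible building blocks: the star $K_{1,5}$ (with $\nu=1$ and five edges), \fourstuff~(with $\nu=4$ and $21$ edges), and \sevenstuff~(with $\nu=7$ and $37$ edges). First I would fix $G$ to be an edge-extremal graph in $\planar(6,\nu)$ that, among all such graphs, has the maximum number of components isomorphic to a star, then subject to that the maximum number isomorphic to \sevenstuff, and then subject to that the maximum number isomorphic to \fourstuff. The heart of the argument is the claim that every connected component of $G$ is isomorphic to $K_{1,5}$, \fourstuff, or \sevenstuff; once this is established, a short counting argument finishes the proof. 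The matching lower bound for pivotal graphs is a direct computation verifying that they lie in $\planar(6,\nu)$ and attain the stated edge count in both cases of the definition.

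For the claim I would argue by contradiction, letting $H$ be a component of none of these three types. If $H$ is a star, then $\Delta(H)\le 5$ forces $H=K_{1,a}$ with $a\le 4$, and replacing it by $K_{1,5}$ strictly increases the edge count while preserving $\nu$ and planarity, contradicting edge-extremality. Otherwise $H$ is not a star, so by Lemma~\ref{lem:factorc} it is factor-critical and hence has $\card{V_H}=2\nu_1+1$ vertices, where $\nu_1=\nu(H)$. Two bounds constrain $\card{E_H}$: planarity gives $\card{E_H}\le 6\nu_1-3$ (Observation~\ref{lem:planar:edges}), and $\Delta(H)\le 5$ on an odd number of vertices gives $\card{E_H}\le 5\nu_1+2$. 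For each $\nu_1$ I would replace $H$ by a pivotal graph $H'$ in $\planar(6,\nu_1+1)$, which has $\nu(H')=\nu_1$ and $f(\nu_1):=5\nu_1+2\lfloor\nu_1/7\rfloor+\ind[\nu_1\bmod 7\ge 4]$ edges. For $\nu_1\le 4$ the planar bound already gives $\card{E_H}\le f(\nu_1)$, and for $\nu_1\ge 7$ a direct computation gives $5\nu_1+2\le f(\nu_1)$. When $\card{E_H}<f(\nu_1)$ the replacement immediately contradicts edge-extremality (this covers $\nu_1\in\{1,2\}$ and $\nu_1\ge 11$); when $\card{E_H}=f(\nu_1)$ the replacement leaves $\card{E_G}$ unchanged but strictly increases the number of star components (for $\nu_1\in\{3,5,6,8,9,10\}$), of \sevenstuff~(for $\nu_1=7$), or of \fourstuff~(for $\nu_1=4$), contradicting the corresponding tie-breaking choice.

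The one place the two generic bounds are not enough --- and the main obstacle --- is $\nu_1\in\{5,6\}$. Here $f(5)=26$ and $f(6)=31$, but the degree bound only yields $\card{E_H}\le 5\nu_1+2=f(\nu_1)+1$, so a component with one extra edge is not ruled out by counting alone. I would resolve this by a degree-sequence obstruction, exactly mirroring the $\nu_1=3$ sub-case of Lemma~\ref{lemma:dis5}: if $\card{E_H}=5\nu_1+2$, then the degree sum $2\card{E_H}=10\nu_1+4$ is one below the maximum $5(2\nu_1+1)$, which forces exactly one vertex of degree $4$ and all others of degree $5$, that is, the degree sequence $5^{10}4$ for $\nu_1=5$ and $5^{12}4$ for $\nu_1=6$. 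By Theorem~\ref{thm:degreeseq} no planar graph realizes either sequence, so in fact $\card{E_H}\le 5\nu_1+1=f(\nu_1)$ and the replacement argument applies. This is precisely why Theorem~\ref{thm:degreeseq} is needed.

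Finally, assuming the claim, I would write $G$ as a disjoint union of $a$ copies of \sevenstuff, $b$ copies of \fourstuff, and $c$ copies of $K_{1,5}$. Two exchange arguments bound the last two multiplicities: replacing seven copies of $K_{1,5}$ by one \sevenstuff~gains two edges, so edge-extremality forces $c\le 6$; and replacing two copies of \fourstuff~by one \sevenstuff~together with one $K_{1,5}$ preserves the edge count but adds a star, so the tie-break on stars forces $b\le 1$. Writing $\card{E_G}=5\,\nu(G)+2a+b$ with $\nu(G)=7a+4b+c\le\nu-1$ and $a\le\lfloor(\nu-1)/7\rfloor$, a short case distinction on $(\nu-1)\bmod 7$ --- using that $a=\lfloor(\nu-1)/7\rfloor$ together with $(\nu-1)\bmod 7\le 3$ forces $b=0$, while $a<\lfloor(\nu-1)/7\rfloor$ leaves enough slack --- yields $\card{E_G}\le 5(\nu-1)+2\lfloor(\nu-1)/7\rfloor+\ind[\nu-1\bmod 7\ge 4]$, completing the proof.
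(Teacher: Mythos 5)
Your proposal is correct and follows essentially the same approach as the paper: the same extremal choice with tie-breaking on component types, the same use of Lemma~\ref{lem:factorc}, Observation~\ref{lem:planar:edges} and the degree bound $5\nu_1+2$, and the same invocation of Theorem~\ref{thm:degreeseq} to handle $\nu_1\in\{5,6\}$. The only (harmless) differences are cosmetic: a three-level rather than two-level tie-break, and a final count via $\card{E_G}=5\nu(G)+2a+b$ with $a\le\lfloor(\nu-1)/7\rfloor$, $b\le 1$ instead of the paper's identification of $G$ as pivotal.
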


\begin{proof}
It is straightforward to see that if $G$ is a pivotal graph in $\planar(6,\nu)$, then $\card{E_G} = 5(\nu-1) + 2\cdot\left\lfloor \frac{\nu-1}{7}\right\rfloor + \ind[\nu-1 \bmod 7 \ge 4]$.

	Let $G$ be an edge-extremal graph in $\planar(6, \nu)$ with maximum number of connected components that are isomorphic to stars, and subject to that, with maximum number of components isomorphic to \fourstuff~or \sevenstuff.
	
\begin{claim}
Every connected component of $G$ is isomorphic to \fourstuff, \sevenstuff~or to $K_{1,5}$.
\end{claim}

\begin{claimproof}
Suppose that $G$ has a component $H$ that is not isomorphic to $\fourstuff$, neither to $\sevenstuff$, nor to $K_{1,5}$. If $H$ is a star, then $H$ is isomorphic to $K_{1,a}$, with $a\leq 4$. So we can replace the connected component $H$ in $G$ by a copy of $K_{1,5}$ and obtain a graph that has degree at most five and the same matching number as $G$, but has strictly more edges than $G$. This is a contradiction with the fact that $G$ is edge-extremal in $\planar(6,\nu)$. Hence $H$ is not a star. Let $\nu_1$ the size of maximum matching in $H$. By Lemma~\ref{lem:factorc}, $H$ is factor-critical and therefore $\card{V_H} = 2\nu_1 + 1$. By Lemma~\ref{lem:planar:edges}, we have that $\card{E_H} \le 6\nu_1 - 3$.
	Therefore, if $\nu_1 \le 3$, we can replace $H$ in $G$ by $\nu_1 K_{1,5}$ without modifying the matching number of $G$, neither decreasing the number of edges in $G$. However, we obtain a graph that has strictly more connected components that are isomorphic to $K_{1,5}$, a contradiction with our initial choice of $G$. So we can assume $\nu_1\geq 4$ and consider the following four cases.
	
	If $\nu_1=4$, then $|V_H|=9$ and by Observation~\ref{lem:planar:edges}, $|E_H|\leq 21$. Hence we can replace $H$ in $G$ by a graph isomorphic to \fourstuff~without modifying the degree, the matching number, nor the number of edges of $G$. That is, we obtain another edge extremal graph in $\planar(6,\nu)$ that has more connected components isomorphic to \fourstuff~than $G$, a contradiction.
	
	If $\nu_1=5$, then $|V_H|=11$. By Theorem~\ref{thm:degreeseq}, the degree sequence of $H$ cannot be $5^{10}4$. Hence $|E_H|\leq 26$. In this case, we can replace $H$ in $G$ by a copy of $\fourstuff+K_{1,5}$ and obtain a graph in $\planar(6, \nu)$
	that has at least as many edges as $G$, but more components isomorphic to stars, a contradiction.

	If $\nu_1=6$, then $|V_H|=13$. By Theorem~\ref{thm:degreeseq}, the degree sequence of $H$ cannot be $5^{12}4$. Hence $|E_H|\leq 31$. So we can replace $H$ by a copy of $\fourstuff+2K_{1,5}$ and obtain a graph in $\planar(6, \nu)$ with at least as many edges as $G$, but more stars, a contradiction with the choice of $G$.
	
	Now suppose that $\nu_1 \ge 7$. Recall that $|V_H|=2\nu_1+1$. 
	The degree sequence yielding the largest possible number of edges in $H$ is $5^{2\nu_1}4$,
	in which case $\card{E_H} = 5\nu_1 + 2$.
	Let $H'$ be a pivotal graph in $\planar(6, \nu_1+1)$. Note that $|E_{H'}|= 5\nu_1 + 2\cdot\left\lfloor \frac{\nu_1}{7}\right\rfloor + \ind[\nu_1 \bmod 7 \ge 4]$. Thus, 
	\begin{align*}
		\card{E_{H'}} - \card{E_H} &= 5\nu_1 + 2\cdot\left\lfloor \frac{\nu_1}{7}\right\rfloor + \ind[\nu_1 \bmod 7 \ge 4] - (5 \nu_1 + 2) \\
			&= 2\cdot\left\lfloor \frac{\nu_1}{7}\right\rfloor + \ind[\nu_1 \bmod 7 \ge 4] - 2 \\
			&\ge 0.
	\end{align*}
	
	This implies that we can replace the component $H$ in $G$ by a graph that is isomorphic to $H'$ and obtain another graph in $\planar(6,\nu)$ that has at least as many edges as $G$, the same matching number as $G$, but it has more connected components isomorphic to $K_{1,5}$, \fourstuff~or \sevenstuff, a contradiction with our choice of $G$.
\end{claimproof}

If $G$ has two components isomorphic to \fourstuff, we can replace these two components by a copy of $\sevenstuff+K_{1,5}$, without changing the number of edges of $G$. Hence we can assume $G$ has at most one component isomorphic to \fourstuff. Note that, since $G$ is edge-extremal, $G$ has at most three components that are isomorphic to $K_{1,5}$. Indeed, if $G$ had four components isomorphic to $K_{1,5}$, we would be able to replace them by a copy of \fourstuff, and obtain a graph with more edges than $G$. Finally if $G$ contains $\fourstuff+3K_{1,5}$, we can replace this by a copy of \sevenstuff~without changing the number of edges of $G$. Hence, we can assume that:

\begin{enumerate}
\item[(i)] $G$ has at most one component isomorphic to \fourstuff~and at most three isomorphic to $K_{1,5}$;
\item[(ii)] if $G$ has a component isomorphic to \fourstuff, then $G$ has at most two components isomorphic to $K_{1,5}$.
\end{enumerate}

From this, we conclude that $G$ is pivotal in $\planar(6,\nu)$. Hence, $|E_G|= 5(\nu-1) + 2\cdot\left\lfloor \frac{\nu-1}{7}\right\rfloor + \ind[\nu-1 \bmod 7 \ge 4]$. Since $G$ is edge-extremal, any $G'\in \planar(6,\nu)$ is such that $|E_{G'}|\leq |E_G|$. This concludes the proof of Lemma~\ref{lemma:dis6}.	
\end{proof}


To conclude the proof of Theorem~\ref{thm:main}, we consider the case $d=7$.

\begin{lemma}\label{lemma:dis7}
If $G$ is a graph in $\planar(7,\nu)$, then $|E_G|\leq 6(\nu-1)$. Moreover, equality holds if $G$ is isomorphic to $(\nu-1)K_{1,6}$.
\end{lemma}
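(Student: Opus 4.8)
The plan is to follow the same template as the proofs of Lemmas~\ref{lemma:dis5} and~\ref{lemma:dis6}, but the argument collapses to a much shorter one because for $d = 7$ the star $K_{1,6}$ is never beaten by a denser planar component. I would begin by letting $G$ be an edge-extremal graph in $\planar(7,\nu)$ having the maximum possible number of connected components isomorphic to $K_{1,6}$, and then argue that every component of $G$ must in fact be a copy of $K_{1,6}$.

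To establish this, I would take an arbitrary component $H$ and split into the star and non-star cases. If $H$ is a star, then $\Delta(H) \le 6$ forces $H \cong K_{1,a}$ with $a \le 6$, and whenever $a < 6$ one can replace $H$ by $K_{1,6}$ to strictly increase the edge count without changing the matching number or violating the degree bound---contradicting edge-extremality---so every star component is already $K_{1,6}$. If $H$ is not a star, then Lemma~\ref{lem:factorc} guarantees that $H$ is factor-critical, whence $\card{V_H} = 2\nu_1 + 1$ with $\nu_1 = \nu(H) \ge 1$.

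The crux---and the single place where $d = 7$ is genuinely easier than the earlier cases---is the comparison that follows. Since $\card{V_H} = 2\nu_1 + 1 \ge 3$, Observation~\ref{lem:planar:edges} gives $\card{E_H} \le 3(2\nu_1 + 1) - 6 = 6\nu_1 - 3$, whereas $\nu_1$ disjoint copies of $K_{1,6}$ carry matching number exactly $\nu_1$ and exactly $6\nu_1$ edges. As $6\nu_1 - 3 < 6\nu_1$, replacing $H$ by $\nu_1 K_{1,6}$ keeps the graph planar, keeps $\Delta \le 6$, leaves the total matching number unchanged, and strictly increases the number of edges, again contradicting edge-extremality. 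More conceptually, the $d - 1 = 6$ edges per unit of matching number furnished by a $(d-1)$-star strictly dominate the at most $6\nu_1 - 3$ edges that planarity permits on a factor-critical component for every $\nu_1 \ge 1$ exactly when $d - 1 \ge 6$; this is precisely why, unlike the cases $d \in \{5,6\}$, no dense exceptional component analogous to $\kfiveminus$, $\fourstuff$ or $\sevenstuff$ is required.

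Having shown that every component is $K_{1,6}$, I would finish by writing $G \cong t\,K_{1,6}$ for some integer $t$, observing $\nu(G) = t \le \nu - 1$, and concluding $\card{E_G} = 6t \le 6(\nu-1)$; since $G$ is edge-extremal and $(\nu-1)K_{1,6}$ already lies in $\planar(7,\nu)$ with $6(\nu-1)$ edges, equality holds and the bound is tight. I do not anticipate any real obstacle here; the only points requiring care are checking that the planarity bound applies (a non-star factor-critical component has at least three vertices, so $\card{V_H}\ge 3$) and noting why the edge-colouring shortcut of Observation~\ref{obs:lessthan5} is unavailable---graphs in $\planar(7,\nu)$ have maximum degree at most $6$, and planar graphs of maximum degree $6$ are not known to be $6$-edge-colourable.
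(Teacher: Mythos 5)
Your proposal is correct and follows essentially the same route as the paper's proof: choose an edge-extremal $G$ maximizing the number of $(d-1)$-star components, invoke Lemma~\ref{lem:factorc} to get $\card{V_H}=2\nu_1+1$ for any non-star component $H$, apply Observation~\ref{lem:planar:edges} to get $\card{E_H}\le 6\nu_1-3 < 6\nu_1$, and replace $H$ by $\nu_1 K_{1,6}$ for a contradiction. The added remarks on why $d=7$ avoids the exceptional components of the $d\in\{5,6\}$ cases are accurate but not needed.
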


\begin{proof}
Let $G$ be an edge-extremal graph in $\planar(7,\nu)$ with maximum number of connected components that are isomorphic to stars. We will show that every component of $G$ is isomorphic to $K_{1,6}$. Suppose for a contradiction that $G$ has a connected component $H$ that is not isomorphic to $K_{1,6}$. Note that since $\Delta(H)\leq 6$, $H$ cannot be a star. Otherwise we would be able to replace $H$ by a graph isomorphic to $K_{1,6}$ and obtain a graph in $\planar(7,\nu)$ with more edges than $G$. Let $\nu_1$ be the size of a maximum matching in $H$. By Lemma~\ref{lem:factorc}, $H$ is factor critical and therefore $|V_H|=2\nu_1+1$. Since $H$ is a planar graph, $|E_H|\leq 3|V_H|-6$. This implies that $|E_H|\leq 3(2\nu_1+1)-6$, that is,  $|E_H|\leq 6\nu_1-3$. If we replace the component $H$ of $G$ by $\nu_1$ stars of degree $6$ we obtain a graph that still belongs to $\planar(7,\nu)$ but that has more edges than $G$, a contradiction. We conclude that $G$ is a disjoint union of stars and therefore $|E_G|= 6\cdot \nu(G)$. Since $G$ is edge-extremal in $\planar(7,\nu)$, any $G'\in \planar(7,\nu)$ has at most as many edges as $G$, which concludes the proof.
\end{proof}


Theorem~\ref{thm:main} now follows directly from Observation~\ref{obs:lessthan5} and Lemmas~\ref{lemma:dis5}, \ref{lemma:dis6} and \ref{lemma:dis7}.

\newcommand\fivestuff{\ensuremath{A_5}}
\newcommand\sixstuff{\ensuremath{A_6}}
\section{Conclusion}\label{sec:conclusion}

In this work, we determined the maximum number of edges that a planar graph
can have if its maximum degree and matching number are bounded. 
We also exhibited examples of graphs achieving this bound.

We point out that the edge-extremal graphs described in the proof of Theorem~\ref{thm:main} are not unique. For instance, the edge-extremal graphs described in Lemma~\ref{lemma:dis6} for the case $d=6$ have connected components that are isomorphic to \fourstuff, \sevenstuff~or $K_{1,5}$ (see Figure~\ref{fig:foursevenstuff}). However, the graphs \fivestuff~and \sixstuff~depicted in Figure~\ref{fig:pivotal6-alt} are also edge-extremal in $\planar(6,6)$ and $\planar(6,7)$, respectively. 

In view of this, it would be interesting to investigate how a connectivity constraint affects the number of edges of planar edge-extremal graphs. A more general question that remains open (also mentioned by Dibek et al.~\cite{DIBEK17} and Blair et al.~\cite{LATIN2020}) is: What is the maximum number of edges that a \emph{connected} graph can have as a function of its degree and matching number? The answer to this question might require a different approach, as Lemma~\ref{lem:factorc}, one of the main tools in all the results obtained so far, is no longer applicable.

\begin{figure}
	\centering
\begin{tikzpicture}
	\begin{scope}
		\node[circle, minimum size=1cm] (C1) at (0,0) {};
		\node[circle, minimum size=2cm] (C2) at (0,0) {};
		\node[circle, minimum size=5cm] (C3) at (0,-0.375) {};
		
		\node[vtx] (1) at (0,0) {};
		\node[vtx] (2) at (C1.210) {};
		\node[vtx] (3) at (C1.270) {};
		\node[vtx] (4) at (C1.330) {};
		\node[vtx] (5) at (C1.60) {};
		\node[vtx] (6) at (C1.120) {};
			
		\node[vtx] (7) at (C2.240) {};
		\node[vtx] (8) at (C2.300) {};
	
		\node[vtx] (9) at (C3.210) {};
		\node[vtx] (10) at (C3.330) {};
		\node[vtx] (11) at (C3.90) {};
	
		\foreach \i in {2,3,4,5,6} {
			\draw[edge] (1) -- (\i);
		}
		\foreach \i in {3,6,7,9} {
			\draw[edge] (2) -- (\i);
		}
		\foreach \i in {4,7,8} {
			\draw[edge] (3) -- (\i);
		}
		\foreach \i in {5,8,10} {
			\draw[edge] (4) -- (\i);
		}
		\foreach \i in {6,10,11} {
			\draw[edge] (5) -- (\i);
		}
		\foreach \i in {9,11} {
			\draw[edge] (6) -- (\i);
		}
		\foreach \i in {8,9} {
			\draw[edge] (7) -- (\i);
		}
		\foreach \i in {10} {
			\draw[edge] (8) -- (\i);
		}
		\foreach \i in {10,11} {
			\draw[edge] (9) -- (\i);
		}
		\foreach \i in {11} {
			\draw[edge] (10) -- (\i);
		}
	
		
		\node[] at (0,-2.5) {$\fivestuff$};
	\end{scope}
	\begin{scope}[xshift=7.5cm]
	\node[circle, minimum size=1cm] (C1) at (0,0) {};
	\node[circle, minimum size=2cm] (C2) at (0,0) {};
	\node[circle, minimum size=5cm] (C3) at (0,-0.375) {};
	
	\node[vtx] (1) at (0,0) {};
	\node[vtx] (2) at (C1.90) {};
	\node[vtx] (3) at (C1.180) {};
	\node[vtx] (4) at (C1.240) {};
	\node[vtx] (6) at (C1.0) {};
	
	\gettikzxy{(6)}{\xsix}{\ysix}
	\gettikzxy{(4)}{\xfour}{\yfour}
	\node[vtx] (5) at (\xsix,\yfour) {};
	
	\node[vtx] (7) at (C2.70) {};
	\node[vtx] (8) at (C2.110) {};
	\node[vtx] (9) at (C2.230) {};
	\node[vtx] (10) at (C2.270) {};
	
	\node[vtx] (11) at (C3.90) {};
	\node[vtx] (12) at (C3.210) {};
	\node[vtx] (13) at (C3.330) {};
	
	\foreach \i in {2,3,4,5,6} {
		\draw[edge] (1) -- (\i);
	}
	\foreach \i in {3,6,7,8} {
		\draw[edge] (2) -- (\i);
	}
	\foreach \i in {4,8,9} {
		\draw[edge] (3) -- (\i);
	}
	\foreach \i in {5,9,10} {
		\draw[edge] (4) -- (\i);
	}
	\foreach \i in {6,10,13} {
		\draw[edge] (5) -- (\i);
	}
	\foreach \i in {7,13} {
		\draw[edge] (6) -- (\i);
	}
	\foreach \i in {8,11} {
		\draw[edge] (7) -- (\i);
	}
	\foreach \i in {11,12} {
		\draw[edge] (8) -- (\i);
	}
	\foreach \i in {10,12} {
		\draw[edge] (9) -- (\i);
	}
	\foreach \i in {12,13} {
		\draw[edge] (10) -- (\i);
	}
	\foreach \i in {12,13} {
		\draw[edge] (11) -- (\i);
	}
	\foreach \i in {13} {
		\draw[edge] (12) -- (\i);
	}
	\node[] at (0,-2.5) {$\sixstuff$};
	
	
	\end{scope}
\end{tikzpicture}
	\caption{The graph \fivestuff~is edge-extremal in $\planar(6,6)$, while \sixstuff~is edge-extremal in $\planar(6,7)$.}
	\label{fig:pivotal6-alt}
\end{figure}
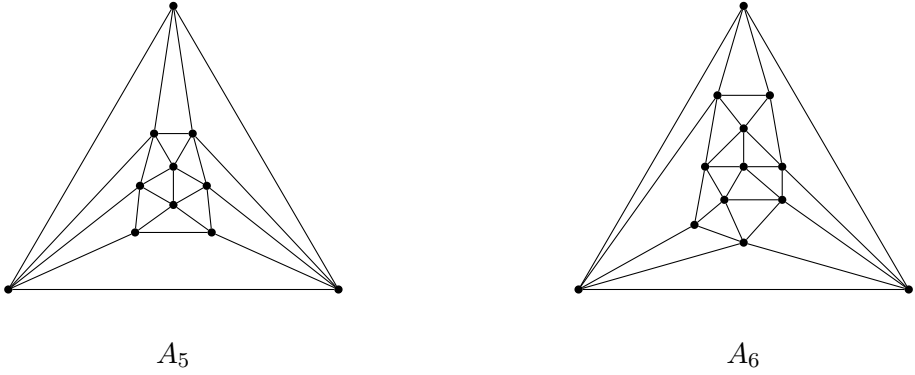

 \bibliographystyle{siam}

  \bibliography{ref}

\end{document}